\documentclass[11pt]{article}


\usepackage{amsmath,amssymb,amsthm}
\usepackage{algorithm}
\usepackage{algorithmicx}


\usepackage{graphicx}
\DeclareGraphicsExtensions{.png}
\graphicspath{{.}}

\usepackage{url}



\newcommand{\M}{\ensuremath{{\cal M}}}
\newcommand{\G}{\ensuremath{{\cal G}}}
\newcommand{\HH}{\ensuremath{{\cal H}}}

\newcommand{\dist}{\ensuremath{\:\mbox{\rm dist}}}
\newcommand{\Id}{\ensuremath{\mbox{\rm Id}}}

\newcommand{\inv}{^{\text{\tiny (-1)}}}

\newtheorem{theorem}{Theorem}

\begin{document}
\pagenumbering{arabic}
\pagestyle{headings} 


\title{Parallel Transport with Pole Ladder: a Third Order Scheme in Affine Connection Spaces which is Exact in Affine Symmetric Spaces}

\author{Xavier Pennec} 

\maketitle



\begin{abstract}
Parallel transport is an important step in many discrete algorithms for statistical computing on manifolds. Numerical methods based on Jacobi fields or geodesics parallelograms are currently used in geometric data processing. In this last class,  pole ladder is a simplification of Schild's ladder for the parallel transport along geodesics that was shown to be particularly simple and numerically  stable in Lie groups \cite{lorenzi:hal-00870489}. So far, these methods were shown to be first order approximations of the Riemannian parallel transport, but
higher order error terms are difficult to establish. 

In this paper, we build on a BCH-type formula on affine connection spaces \cite{gavrilov_algebraic_2006} to establish the behavior of one pole ladder step up to order 5. It is remarkable that the scheme is  of order three in general affine connection spaces with a symmetric connection, much higher than expected.  Moreover, the fourth-order term involves the covariant derivative of the curvature only, which is vanishing in locally symmetric space. We show that pole ladder is actually locally exact in these spaces, and even almost surely globally exact in Riemannian symmetric manifolds. These properties make pole ladder a very attractive alternative to other methods in general affine manifolds with a symmetric connection.
\end{abstract}


\begin{figure}[t!]
\begin{center}
	\includegraphics[width=5cm]{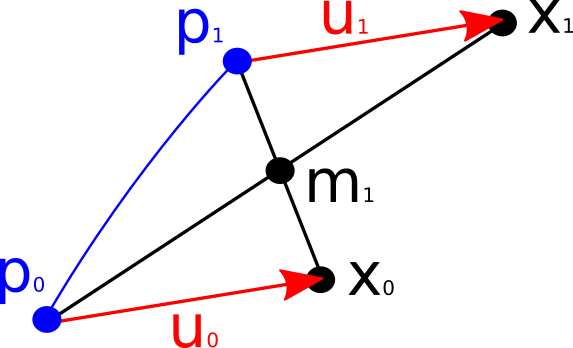}\hspace{2cm}
	\includegraphics[width=5cm]{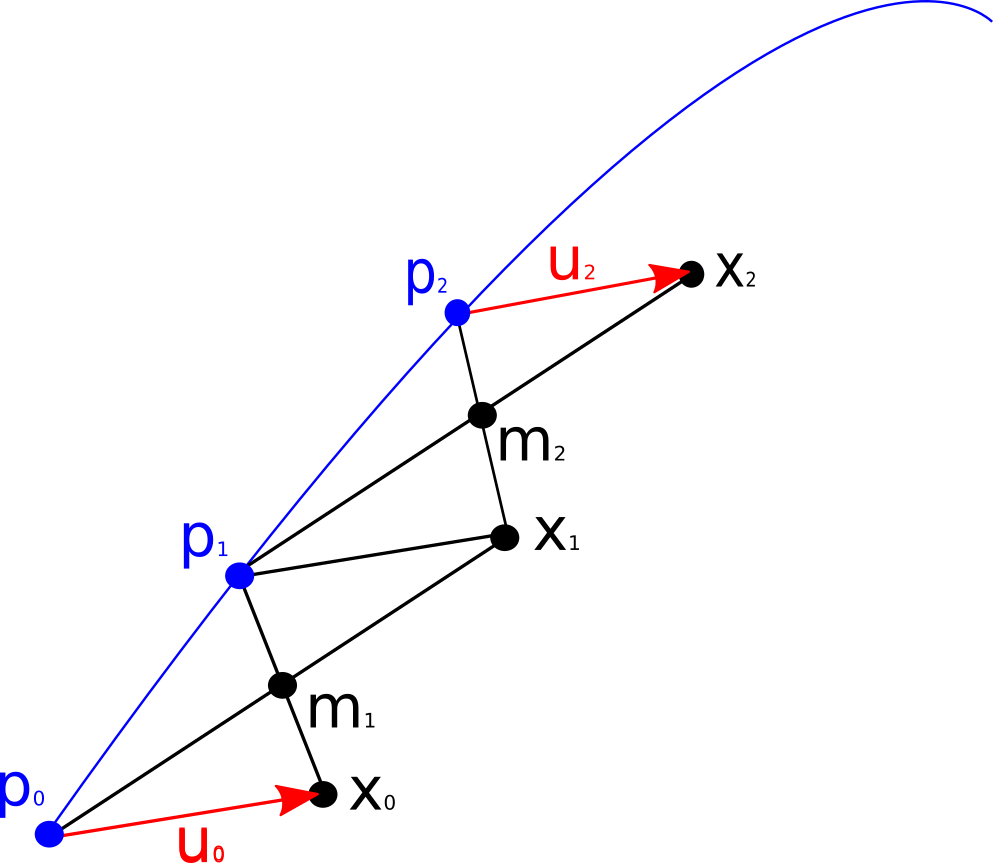}
\end{center}
\caption{Schild's ladder procedure to parallel transports the vector $u_0 = \log_{p_0}(x_0)$ along the sampled curve $(p_0, p_1,\ldots p_n)$. {\bf Left:} First rug of the ladder using an approximate geodesic parallelogram. The vector $u_0 = \log_{p_0}(x_0)$ (rescaled to be small enough if needed) is identified to the geodesic segment $\exp(t u_0)$ for $t\in[0,1]$. One compute the mid-point $m_1$ between $x_0$ and $p_1$ and then extend twice the geodesic from $p_0$ to $m_1$ to obtain the point $x_1 = \exp_{p_0}(2 \log_{p_0}(m_1))$. The vector $u_1 = \log_{p_1}(x_1)$  is a first order approximation of the parallel transport of $u_0$ at $p_1$ \cite{kheyfets_schilds_2000}. {\bf Right:} The method is iterated with rungs at each point sampled along the curve.  Figure adapted from Wikipedia \cite{wiki:SchildsLadder}. 
}\label{SchildsLadder}
\end{figure}

Numerical methods have been proposed in geometric data processing for the parallel transport of vectors along curves in manifolds. The oldest algorithm is probably Schild's ladder, a general method for the parallel transport along arbitrary curves, introduced in the theory of gravitation in \cite{Misner:1973} after Schild's similar constructions \cite{Schild:1970}, and popularized recently by Wikipedia \cite{wiki:SchildsLadder}. The method extends the infinitesimal transport through the construction of geodesic parallelograms (Figure \ref{SchildsLadder}). The method is algorithmically interesting since it only requires the computation of geodesics (initial and boundary value problems) without requiring the knowledge of the second order structure of the space (connection or curvature tensors).  \cite{kheyfets_schilds_2000} proved that the scheme realizes a first order approximation of the parallel transport for a symmetric connection. This makes sense since the skew-symmetric part of the connection, the torsion, does not impact the geodesic equation. Schild's ladder is nowadays increasingly used in non-linear data processing and analysis to implement parallel transport in Riemannian manifolds. One can cite for instance \cite{lorenzi:inria-00616210} for the parallel transport of deformations in computational anatomy or \cite{hauberg_unscented_2013} for parallel transporting the covariance matrix in Kalman filtering. 

In the medical analysis domain, groups of diffeomorphisms are used to encode the shape differences between objects (point sets, curves, surfaces, images), a method coined diffeomorphometry. In this setting, the shape changes measured for one individual need to be transported in a common geometry. A typical example is the analysis of structural brain changes with aging in Alzheimer's disease: the longitudinal morphological changes for a specific subject can be evaluated through the non-linear registration in the geometry of each subject and encoded as the initial tangent vector of a geodesic. For the longitudinal group-wise analysis, these vectors encoding the subject-specific longitudinal trajectories need to be transported in a common reference. To give a geometric structure to diffeomorphisms, right invariant Riemannian kernel metrics are often considered. This is the foundation of the Large Diffeomorphic Deformation Metric Mapping (LDDMM) framework, pioneered by Miller, Trouv\'e and Younes \cite{miller_group_2001}. Within this domain, the use of iterated infinitesimal Jacobi fields for parallel transport was first proposed by  \cite{laurent_younes_jacobi_2007,younes_transport_2008}. A variation  called the fanning scheme  was recently proposed in \cite{Louis_parallel_2017,louis_fanning_hal2017}. A careful numerical analysis showed that the 
scheme is of order one at each step, which provides an approximation inversely proportional to the number of points taken along the curve, similarly to the Schild's ladder.

The symmetric Cartan-Schouten connection provides an alternative affine symmetric space structure on diffeomorphisms where geodesics starting from the identity are the one-parameter subgroups resulting from the flow of Stationary Velocity Fields (SFV).  Geodesics starting from other points are their left and right translation. The idea of parametrizing diffeomorphisms with the flow of SFVs was introduced by \cite{arsigny:inria-00635671}, but it was not fully understood as the geodesics of the Cartan-Schouten connection before \cite{lorenzi:hal-00813835}. It is worth noticing that the canonical affine symmetric space structure on Lie groups provided by the symmetry $s_g(h) = g h\inv g$ (for any elements $g,h$ of the Lie group) got essentially unnoticed in these developments while this is a very important feature of this structure, as we will see in Section \ref{Sec:SymSpaces}.
 In order to implement a parallel transport algorithm that remains consistent with the numerical scheme used to compute the geodesics, \cite{lorenzi:inria-00616210,lorenzi:hal-00813835} proposed to adapt Schild's ladder to image registration with deformations parametrized by SVF. Interestingly, the Schild's ladder implementation appeared to be more stable in practice than the closed-form expression of the symmetric Cartan-Schouten parallel transport on geodesics. The reason is probably the inconsistency of numerical schemes used for the computation of the geodesics and for the transformation Jacobian in the implementation of this exact parallel transport formula. 

Then, we realized that parallel transport along geodesics could exploit an additional symmetry by using the geodesic along which we want to transport as one of the diagonal of the geodesic parallelogram: this gave rise to the pole ladder scheme \cite{lorenzi:hal-00870489}.  Pole ladder combined with an efficient numerical scheme based on the Baker-Campbell-Hausdorff formula was found to be more stable on simulated and real experiments than the other parallel transport scheme tested for the parallel transport in Lie groups. This result and the higher symmetry led us to conjecture that pole ladder could actually be a higher order scheme than Schild's ladder. However, the numerical analysis of the methods remained difficult, especially in affine connection spaces which have no invariant Riemannian metrics.

In this paper, we build on a work of Gavrilov \cite{gavrilov_algebraic_2006} on the Taylor expansion of the composition of two exponentials in  affine connection spaces to establish in Section \ref{Sec:PoleLadder} the approximation provided by one step of the pole ladder up to order 5. It is remarkable that the scheme is of order 3, thus much higher than the first order of the other parallel transport schemes. This makes pole ladder a very attractive alternative to Schild's ladder in general affine manifolds. Moreover, the fourth-order term involves the covariant derivative of the curvature only. 
 Since a vanishing covariant derivative is the characteristic of a locally symmetric space, this suggests that the error could vanish in this case. Thus, we investigate in Section \ref{Sec:SymSpaces} local and global symmetric spaces with affine or Riemannian structure. We show that pole ladder is actually locally exact in one step, and even almost surely globally exact in Riemannian symmetric manifolds. The key feature is that the differential of the symmetry is the negative of the parallel transport, so that the parallel transport along a geodesic segment can be realized using the composition of two symmetries (a transvection). 
Even if this result appears to be new for the geometric computing community, is was already known in mathematics  (the construction is pictured for instance on \cite[Fig.5, p.168]{trofimov_introduction_1994}). The contribution is thus in the connection between the different communities here.


\section{Pole ladder in an affine connection spaces}
\label{Sec:PoleLadder}

This section establishes a Taylor expansion of the approximation provided by one step of pole ladder. We first detail the pole ladder algorithm. We then turn to an equivalent on affine manifolds of the BCH formula for Lie group  before establishing the approximation order of pole ladder. 

\subsection{Pole ladder algorithm}

\begin{figure}[htb!]
\begin{center}
	\includegraphics[width=5cm]{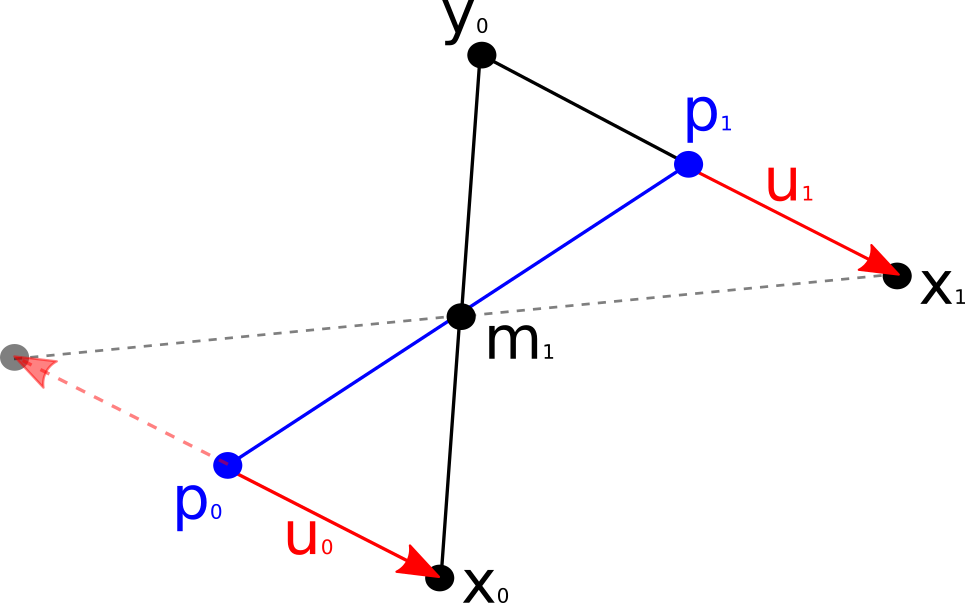}\hspace{2cm}
	\includegraphics[width=4cm]{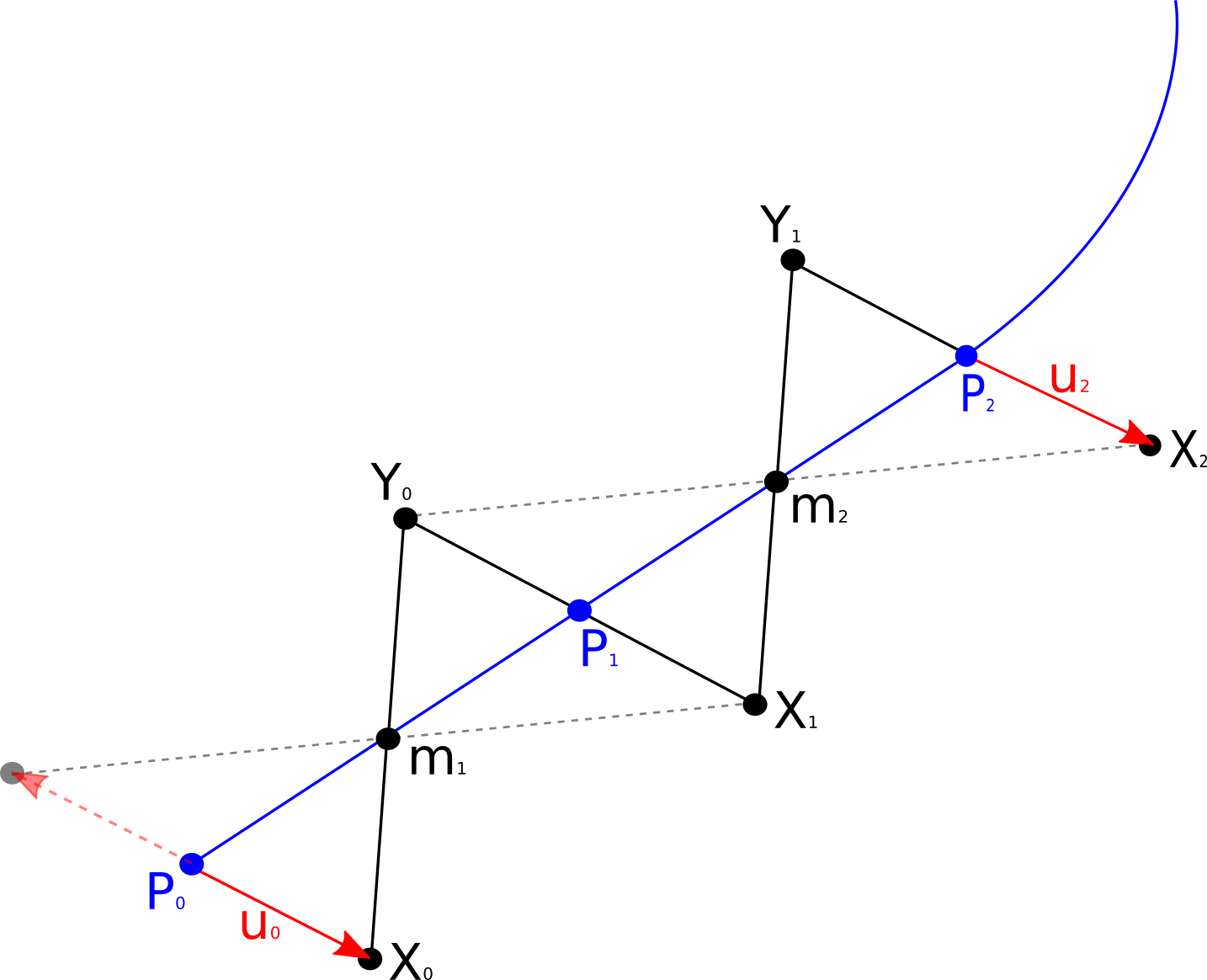}
\end{center}
\caption{Schematic of the pole ladder procedure to parallel transports the vector $u_0=\log_{p_0}(x_0)$ along the geodesic by arc curve $(p_0, p_1,\ldots p_n)$. {\bf Left:} First rug of the ladder using an approximate geodesic parallelogram.  {\bf Right:} The method is iterated with rungs at each point sampled along the curve.  
}\label{PoleLadder}
\end{figure}

Pole ladder is a modification of Schild's ladder for the parallel transport along geodesic  curves which is based on the observation that this geodesic can be taken as one of the diagonals of the geodesic parallelogram. The general procedure is described in Fig.\ref{PoleLadder}: the vector $u_0 = \log_{p_0}(x_0) \in T_{p_0}\M$ (rescaled if needed to be small enough) is identified to the geodesic segment $\exp_{p_0}(t u_0)$ for $t \in [0,1]$. One compute the mid-point $m_1$ between $p_0$ and $p_1$ and then extend twice the geodesic from $x_0$ to $m_1$ to obtain the point $y_1 = \exp_{x_0}(2 \log_{x_0}(m_1))$. The vector $u_1 = -\log_{p_1}(y_1) \in T_{p_1}\M$  is the pole ladder approximation of the parallel transport of $u_0$ at $p_1$. An alternative is to first compute $x'_0 = \exp_{p_0}(-u_0)$ and then extend twice the geodesic from $x'_0$ to $m_1$ to obtain the point $y'_1 = \exp_{x'_0}(2 \log_{x'_0}(m_1))$. The vector $u'_1 =\log_{p_1}(y'_1)$  is another approximation of the parallel transport of $u_0$ at $p_1$.

\subsection{Algorithms for one step of the ladder}

 Pole ladder assumes that the successive points of the geodesic curve are in a sufficiently small convex normal neighborhood so that this mid-point is unique. In order to analyze one single step of the ladder along a geodesic segment, we simplify the notations in the sequel according to figure \ref{PoleLadderNotations}. The original pole ladder procedure is summarized in Algorithm \ref{Algo:Pole1}.

\begin{figure}[htb!]
\begin{center}
	\includegraphics[width=10cm]{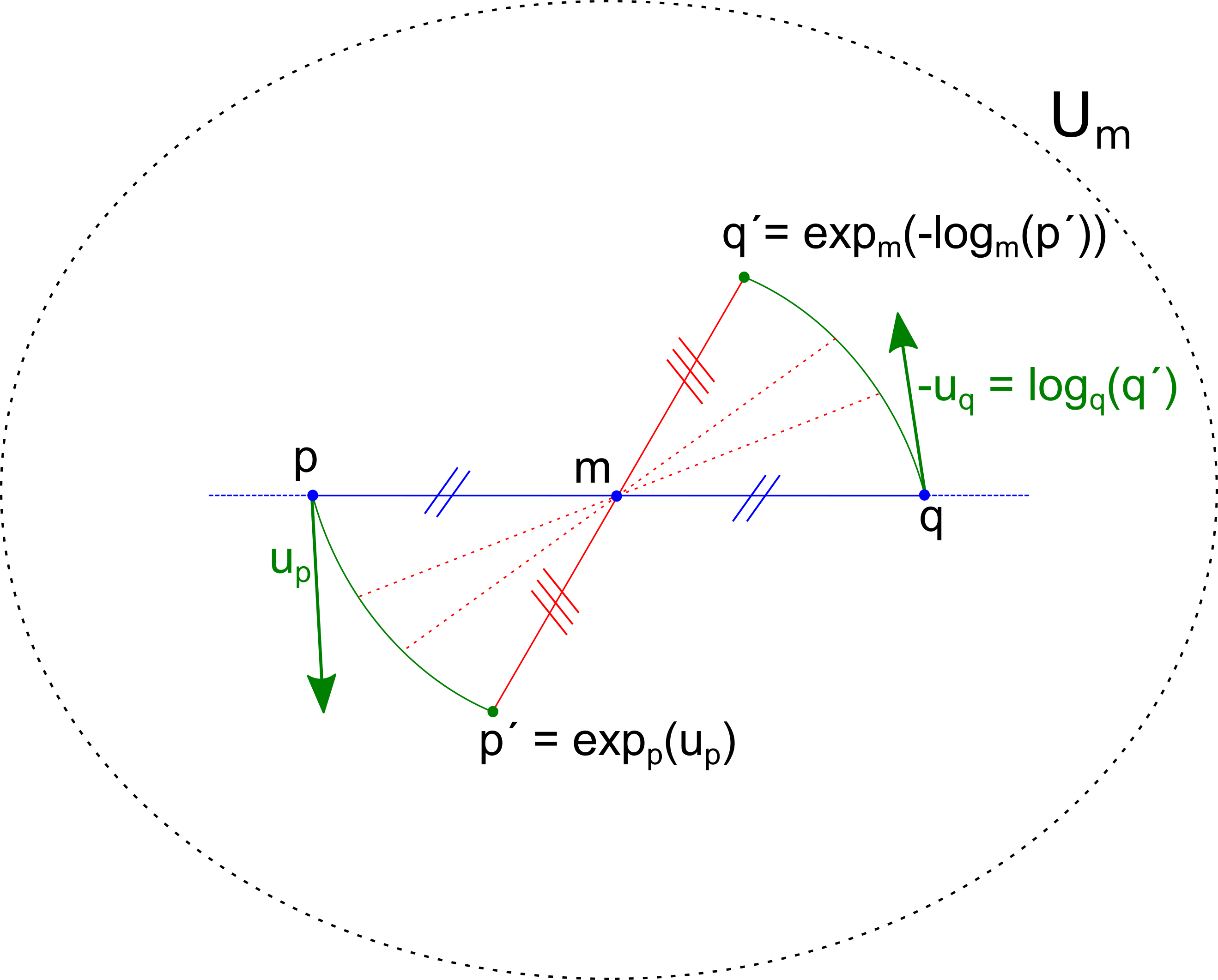}
\end{center}
\caption{One step of the pole ladder scheme.  
}\label{PoleLadderNotations}
\end{figure}

\begin{algorithm}[htb!]
\caption{Parallel transport of vector $u_p \in T_p\M$ along the geodesic segment $[p,q]$.\label{Algo:Pole1}} 
\begin{algorithmic}[1]
\State Compute the midpoint $m = \exp_p(\textstyle \frac{1}{2}\log_p(q))$ between $p$ and $q$. 
\State Compute the end-point $p'= \exp_p(u_p)$ of the geodesic segment $\gamma(t)=  \exp_p(t u_p)$.
\State Compute the geodesic from $p'$ to $m$ and shoot twice to get $q'= \exp_{p'}(2 \log_{p'}(m))$
\State Return the vector $u_q = -\log_q{q'}$  
\end{algorithmic}
\end{algorithm}

Instead of encoding a geodesic $\gamma(t)=  \exp_p(t u_p)$  by its initial point $p$ and initial tangent vector $v$, we can also encode a geodesic segment by its end-points $[p,p'=\exp_p(u_p)]$. We can also rephrase the doubling of the geodesics with a mid-point formulation. This leads to an alternative version of pole ladder using mid-point geodesic symmetries (Algorithm \ref{Algo:Pole2}). Although the two versions of pole ladder are theoretically completely equivalent,  it was experimentally observed that this second version was numerically much more stable when applied to diffeomorphisms acting on images \cite{Jia:MICCAI:2018}. This is most probably be due to the more symmetric formulation that  minimizes numerical errors.

\begin{algorithm}[htb!]
\caption{Parallel transport of the geodesic segment $[p,p']$ along the geodesic segment  $[p,q]$ using mid-point symmetries. \label{Algo:Pole2}} 
\begin{algorithmic}[1]
\State Compute the midpoint $m = \gamma_{[p,q]}(1/2)$ of $[p,q]$.
\State Compute the point $q'= \exp_m(-\log_m(p'))$ symmetric to $p'$ with respect to $m$, i.e such that $m$ is the mid-point of the geodesic segment $[p',q']$.
\State Compute the point $q''= \exp_q(-\log_q(q'))$ symmetric to $q'$ with respect to $q$, i.e such that $q$ is the mid-point of the geodesic segment $[q',q'']$.
\State Return the geodesic segment $[q, q'']$.  
\end{algorithmic}
\end{algorithm}

Notice that the mid-point $m$ of a geodesic segment $[p,q]$ is well defined even in affine connection spaces: this is the point of coordinate $1/2$ in arc-length parametrization. This is in particular an exponential barycenter: $\exp_m(p) + \exp_m(q) =0$. In a Riemannian manifold, it would also be a minimizer of the square distance to the two points, necessarily a Fr\'echet mean if we assumed that all our points belong to a sufficiently small convex neighborhood.


\subsection{A BCH-type formula on affine connection spaces}

In order to analyze the pole ladder approximation of parallel transport in a local coordinate system, we need to compute the Taylor expansion of the parallel transport along a geodesic segment and of the composition of two Riemannian exponentials. Low order Taylor expansion of the Riemannian metric and of  geodesic equations are traditional in Riemannian geometry since Gauss to establish the existence of normal coordinate system,  Gauss lemma or the infinitesimal change of the volume of a geodesic ball due to the curvature. This is also what is used for the proof of \cite{kheyfets_schilds_2000} for Schild's ladder. However,  establishing expansions above the order 3  is much more difficult, and it turns out that we need even more for the pole ladder. One solution was provided by \cite{brewin_riemann_1997,brewin_riemann_2009} with the symbolic computer algebra system Cadabra. In this setting, the starting point is the (pseudo)-Riemannian metric, and it is not so easy to prove that the formulas continue to hold for affine connection spaces.  More recently, a simpler algebraic formulation of higher order covariant derivatives in affine connection spaces \cite{gavrilov_algebraic_2006} led Gavrilov to obtain a combined Taylor expansion of the parallel transport and the composition of exponentials \cite{gavrilov_double_2007}. After recalling this result in our notations, we adapt this formula to the analysis of pole ladder below.

We consider a small convex normal neighborhood $U_m$ of a point $m\in \M$ in an affine connection space $(M, \nabla)$. In a Riemannian manifold, we can chose a sufficiently small regular geodesic ball $B(m,r)$ (see  \cite{karcher77,kendall90}). This means that any two points $x, y$ of the neighborhood can be can be joined by a unique geodesic segment $[x,y]$ and we can define the parallel transport $u_y = \Pi_x^y u \in T_y\M$ of a tangent vector $u \in T_x\M$ along this geodesic. The midpoint of each geodesic segment is also well defined in this neighborhood. 
%
%
The double exponential is defined as the mapping $T_x\M \times T_x\M \rightarrow \M$:
\[
\exp_x(v,u) = \exp_y( \Pi_x^y u) \quad \text{with} \quad y = \exp_x(v).
\]
For sufficiently small vectors $u,v$, one can define the log of this expression (see Figure \ref{Fig:BCH} for notations), which is a formal analog of the Baker-Campbell-Hausdorff (BCH) formula for Lie groups.

\begin{figure}[htb!]
\centering
	\includegraphics[width=8cm]{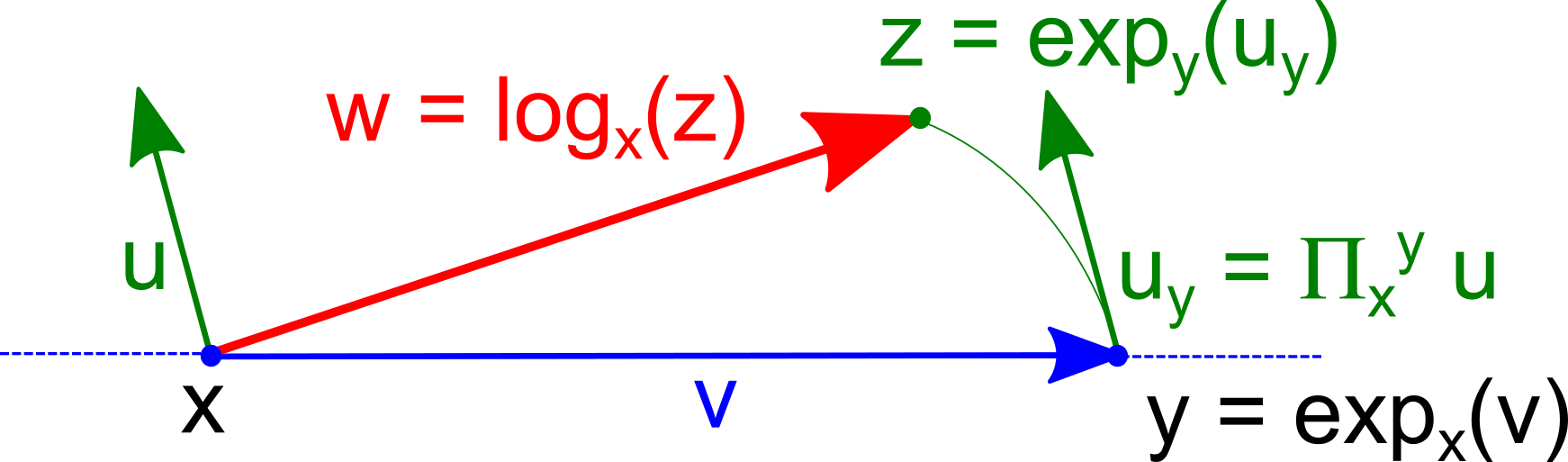}
\caption{The log of the composition of two exponentials (BCH-type formula) in a normal coordinate system at x.}\label{Fig:BCH}
\end{figure}

\begin{theorem}[Gavrilov's BCH-type formula on manifolds \cite{gavrilov_algebraic_2006,gavrilov_double_2007}]\label{thm:BCH}
Let $(M,\nabla)$ be an affine connection space with vanishing torsion and Riemannian curvature tensor $R(u,v)$. In a sufficiently small neighborhood of $0 \in T_x\M \times T_x\M$, the log of the double exponential:
\begin{equation}
h_x(v,u) = \log_x(\exp(x)(v,u) )= \log_x(  \exp_{\exp_x(v)}( \Pi_x^{\exp_x(v)} u)   )
\label{eq:BCH}
\end{equation}
has the following series expansion:
\begin{equation}
\begin{split}
h_x(v,u) & =  v+u + \frac{1}{6} R(u,v)v + \frac{1}{3} R(u,v) u 
           \\ &    + \frac{1}{12}\nabla_v R(u,v)v  + \frac{1}{24}\nabla_u R(u,v)v 
					\\ &   + \frac{5}{24}\nabla_v R(u,v)u + \frac{1}{12}\nabla_u R(u,v)u + O(\|u+w\|^5).
\end{split}
\end{equation}
If the connection is not symmetric, then the torsion $T(u,v) = \nabla_u v - \nabla_v u -[u,v]$ appears as a second order term and in all higher order terms.
\end{theorem}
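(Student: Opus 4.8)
\medskip\noindent\textbf{Proof strategy.}
The plan is to compute the three ingredients of the double exponential as truncated power series in one common chart and then compose them. The natural chart is a normal (geodesic) coordinate system centered at $x$, in which geodesics through $x$ are straight lines; in particular the point $y=\exp_x(v)$ has coordinates exactly $v$, and the coordinates of any point are its own $\log_x$. First I would recall the Taylor expansion of the Christoffel symbols $\Gamma^a_{bc}$ in such a chart: since the fully symmetrized derivatives of $\Gamma$ at $x$ vanish, the expansion is governed entirely by the curvature and its covariant derivatives, schematically $\Gamma^a_{bc}(z)=-\tfrac13\bigl(R^a{}_{bcd}+R^a{}_{cbd}\bigr)z^d-\tfrac{1}{12}(\nabla\!R\ \mathrm{terms})\,z^d z^e+O(|z|^3)$. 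The vanishing-torsion hypothesis is exactly what makes $\Gamma$ symmetric in its lower indices, so that these standard normal-coordinate lemmas apply.

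Next I would assemble the pieces up to total degree four in $(u,v)$. The base point is $y=v$. The parallel transport $\tilde u:=\Pi_x^{y}u$ along $t\mapsto tv$ is obtained by solving the linear ODE $\dot w^a+\Gamma^a_{bc}(tv)\,v^b w^c=0$, $w(0)=u$, by a few Picard iterations, which gives $\tilde u$ as a series in $v$, linear in $u$. Then $\exp_y(\tilde u)$ is the value at $t=1$ of the solution of the geodesic equation $\ddot c^a+\Gamma^a_{bc}(c)\,\dot c^b\dot c^c=0$ with $c(0)=y$, $\dot c(0)=\tilde u$, again obtained by Picard iteration; by construction the coordinates of the resulting point are $h_x(v,u)$. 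Collecting the three expansions and sorting by total degree gives $u+v$ at degree one, nothing at degree two (under vanishing torsion), the two $R$-terms at degree three, and the four $\nabla\!R$-terms at degree four.

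To keep the bookkeeping under control I would exploit that the whole construction is coordinate-free, so each Taylor coefficient of $h_x$ is a natural tensor built from $\nabla$ and must, by the classical structure theory of natural tensors, be a universal contraction of $R$, $\nabla R$, $\ldots$ with $u$ and $v$, with homogeneity forcing the degree to match the order. Since $R(u,v)$ is skew in $u,v$, this leaves only $R(u,v)u$ and $R(u,v)v$ at degree three, and only $\nabla_u R(u,v)u$, $\nabla_u R(u,v)v$, $\nabla_v R(u,v)u$, $\nabla_v R(u,v)v$ at degree four (the second Bianchi identity does not further relate these, since two of its three cyclic slots would coincide). The theorem then reduces to pinning down six scalars, which one can read off from the ODE computation above or, more cheaply, fix by matching the general formula against closed-form double exponentials on model spaces — space forms for the degree-three coefficients, and a homogeneous space with $\nabla R\neq 0$ for the degree-four ones. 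This is essentially Gavrilov's route, his algebraic calculus of higher covariant derivatives being the device that organizes the nested integrals cleanly.

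The main obstacle is precisely the degree-four bookkeeping: one must track the iterated integrals produced by the two Picard schemes, correctly re-express the ordinary derivatives of $\Gamma$ occurring there as covariant derivatives of $R$, and avoid losing cross-terms when composing the transport with the second exponential. The remainder estimate $O(\|u+v\|^5)$ (the statement's $\|u+w\|$ being a typo for $\|u+v\|$) then follows from smoothness of $\nabla$ and uniform convergence of the Picard iterations on a small enough ball about the origin. For the non-symmetric case one repeats the argument retaining the skew part of $\Gamma$: geodesics, hence the second exponential, see only the symmetric part, but parallel transport does not, so the expansion of $\Pi_x^y u$ picks up a term $\tfrac12 T(u,v)$ at first order in $v$, which feeds a degree-two term proportional to $T(u,v)$ into $h_x$ and torsion contributions into every higher-order term.
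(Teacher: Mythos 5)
First, a point of comparison that matters here: the paper does not prove Theorem~\ref{thm:BCH} at all --- it is imported verbatim from Gavrilov's work \cite{gavrilov_algebraic_2006,gavrilov_double_2007}, and the paper's contribution begins only downstream, in applying the expansion to pole ladder. So your proposal is necessarily a different route from ``the paper's''; it is closer in spirit to the classical normal-coordinate computations of Brewin that the paper explicitly contrasts with Gavrilov's algebraic calculus. Your overall strategy (normal coordinates at $x$, Taylor expansion of $\Gamma$ via $R$ and $\nabla R$, Picard iteration for the transport ODE and the geodesic ODE, then composition) is sound in principle and, if carried to completion, would reproduce the theorem; you are also right that $O(\|u+w\|^5)$ is a typo for $O(\|u+v\|^5)$, and your account of how the torsion enters at second order through the transport equation (since $\Gamma^a_{bc}(x)=\tfrac12 T^a_{bc}(x)$ in normal coordinates) is correct.

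The genuine gap is that the six rational coefficients --- which are the entire content of the theorem, and exactly what the rest of the paper consumes --- are never actually determined, and neither of the two shortcuts you offer for pinning them down works as stated. The naturality argument is too quick: at degree three the universal candidates include not only $R(u,v)u$ and $R(u,v)v$ but also Ricci-type contractions such as $\mathrm{Ric}(u,v)\,u$, $\mathrm{Ric}(u,u)\,v$, etc., and similarly traced $\nabla R$ terms at degree four; to exclude them you must observe that the Picard iterations only ever contract $\Gamma$ and its derivatives against the vectors $u$, $v$ and never produce a trace --- an extra argument you do not supply. The model-space matching is worse off at degree four: you need a space with $\nabla R\neq 0$ \emph{and} a closed-form double exponential, and no such example is exhibited (generic homogeneous spaces with $\nabla R\neq 0$, e.g.\ Berger spheres or generic left-invariant metrics on Lie groups, do not have tractable geodesics; the tractable ones tend to be symmetric, i.e.\ $\nabla R=0$, which is precisely useless here). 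So the proposal as written establishes the \emph{shape} of the expansion but not the coefficients $\tfrac16,\tfrac13,\tfrac1{12},\tfrac1{24},\tfrac5{24},\tfrac1{12}$; to close it you would have to actually carry out the degree-four bookkeeping you identify as ``the main obstacle,'' or else genuinely reproduce Gavrilov's operator-algebraic derivation rather than gesture at it.
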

In the above equations, $O(\|u+w\|^5)$ denotes homogeneous terms of degree 5 and higher in $u$ and $v$ (the norm of $T_x\M$ can be taken arbitrarily), and the curvature tensor value is taken at $x$.  The full series expansion with torsion up to order 5 is provided in  \cite{gavrilov_algebraic_2006}.

\subsection{Taylor expansion of pole ladder}
\begin{figure}[bt!]
\begin{center}
	\includegraphics[width=10cm]{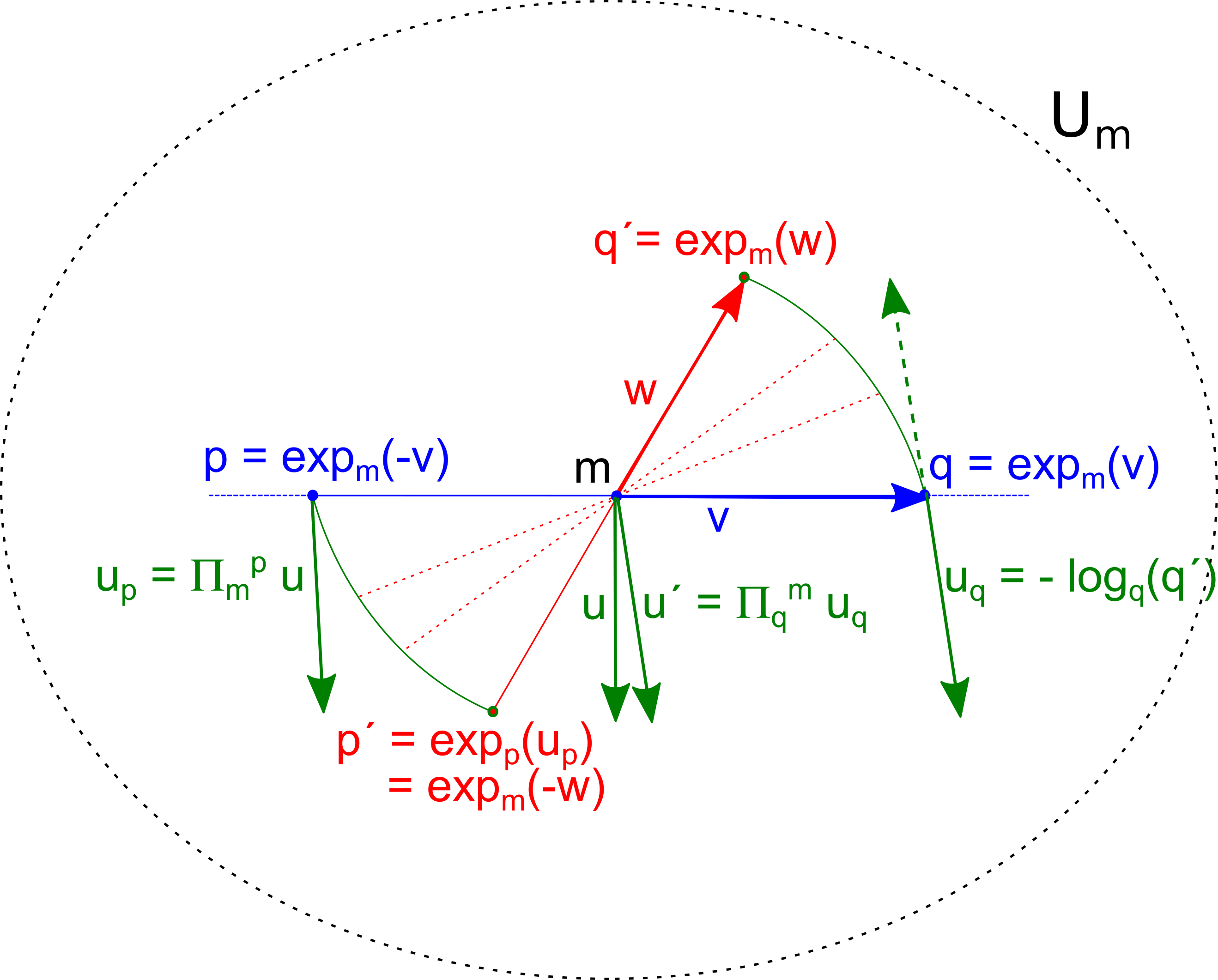}
\end{center}
\caption{Analysis of one pole ladder step in a normal coordinate system at $m$.}\label{Fig:PoleLadderAnalysis}
\end{figure}

We now have the tools to study the second version of pole ladder (algorithm \ref{Algo:Pole2}).
We consider that all points belong to a sufficiently small convex normal coordinate system $U_m$ centered at $m$ with the conventions of Figure \ref{Fig:PoleLadderAnalysis}. The geodesic $\exp_m( t v)$ starting at $m$ with tangent vector $v$ is a straight line in this chart going from point $p$ at time -1 to point $q$ at time 1. We denote by $u= \Pi_p^m u_p$ the parallel transport of the vector $u_p$ along the geodesic segment $[p,m]$. This is equivalent to $u_p = \Pi_m^p u$. We denote by $-w$ the coordinates of $p'= \exp_p( u_p)$ in the normal coordinate system at $m$: $-w = \log_m( \exp_p( \Pi_m^p u ))$ with $p=\exp_m(-v)$. Using the previous BCH formula, we have:
\begin{equation}
\label{eq:w_pole1}
\begin{split}
w  =  & - h_m( -v,u) =  v - u  - \frac{1}{6} R(u,v)v + \frac{1}{3} R(u,v) u 
             + \frac{1}{12}\nabla_v R(u,v)v \\ &   - \frac{1}{24}\nabla_u R(u,v)v 
					   - \frac{5}{24}\nabla_v R(u,v)u + \frac{1}{12}\nabla_u R(u,v)u + O(\|u+w\|^5).
\end{split}
\end{equation}
Notice that the geodesic from $p$ to $p'$  deviates from the straight line in the normal coordinate system at $m$ chart  because of curvature.

We have an equivalent construction for the symmetric part of the structure: the symmetric of $p'$ with respect to $m$ is $q'=\exp_m(w)$. Taking the log at $q$, we get the opposite of the pole ladder transport  $- u_q = \log_q( q')$. In order to compare this vector to the exact parallel transport in a symmetric way, we now parallel transport it along the geodesic segment $[q,m]$  to get  $u'= \Pi_q^m u_q$, that has to be compared with $u$. Rephrasing this combination of transformations in the reverse way, we get that $u'$ is the solution of $w = \log_m( \exp_q( \Pi_m^q (-u') ))$ with $q=\exp_m(v)$. Using Gavrilov's formula, we can write the following polynomial expansion of $w=h_m(v,-u')$ in the variables $v$ and $u'$:
\[
\begin{split}
w  =  & h_m(v,-u') =  v -u' - \frac{1}{6} R(u',v)v + \frac{1}{3} R(u',v) u' 
           \\ &    - \frac{1}{12}\nabla_v R(u',v)v  + \frac{1}{24}\nabla_u' R(u',v)v 
					\\ &   + \frac{5}{24}\nabla_v R(u',v)u' - \frac{1}{12}\nabla_{u'} R(u',v)u' + O(\|u'+w\|^5).
\end{split}
\]
However, what we want to obtain is the polynomial expansion of $u'$ with respect to the variables $v$ and $w$. Such a polynomial can be written:
\[
\begin{split}
u' = & A_v + A_w +B_{vv} + B_{vw} + B_{ww} + C_{vvv} + C_{vvw} + C_{vww} +C_{www} 
\\ & + D_{vvvv} +D_{vvvw} + D_{vvww} + D_{vwww} + D_{wwww} + O(\|u'+w\|^5),
\end{split}
\]
where each term is a multi-linear map in the indexed variables. Plugging this expression into Gavrilov's formula above, we can identify each term iteratively for increasing orders: the first order is trivially 
$u'= v - w + O(\|v+w\|^2)$ and the second order terms have to vanish because there are none in the BCH-type formula. Introducing the third order $u' = v - w +  C_{vvw} + C_{vww} +C_{www} + O(\|v+w\|^4)$ into Gavrilov's formula, we get:
\[ \textstyle
  C_{vvw} + C_{vww} +C_{www}    =   - \frac{1}{6} R(v - w,v)v + \frac{1}{3} R(v - w,v) (v - w)
         + O(\|v+w\|^4).
\]
Simplifying the expression thanks to the skew symmetries, we obtain:
\[
\begin{split}
u'= &  v - w   - \frac{1}{6} R(w,v) v + \frac{1}{3} R(w,v) w + O(\|v+w\|^4).
\end{split}
\]
The forth order is obtained by:
\[
\begin{split}
D_{vvvv}  & +D_{vvvw} + D_{vvww} + D_{vwww} + D_{wwww}   =  
           \\ &    - \frac{1}{12}\nabla_v R(v - w,v)v  + \frac{1}{24}\nabla_{(v - w)} R(v - w,v)v 
					\\ &   + \frac{5}{24}\nabla_v R(v - w,v) (v - w) - \frac{1}{12}\nabla_{(v - w)} R(v - w,v)(v - w) + O(\|v+w\|^5),
\end{split}
\]
which gives after simplification:
\[
 \begin{split}
u'= &  v - w   - \frac{1}{6} R(w,v) v + \frac{1}{3} R(w,v) w 
       - \frac{1}{12}\nabla_v R(w,v) v  - \frac{1}{24}\nabla_{w} R(w,v)v 
         \\ &  + \frac{1}{8}\nabla_v R(w,v) w + \frac{1}{12}\nabla_{w} R(w,v)w
					+ O(\|v+w\|^5).
\end{split}
\label{eq:w_pole2}
\]

Finally, we can substituting the value of $w$ from Equation (\ref{eq:w_pole1}) into this expression.
This leads to: 
\begin{equation}
 \begin{split}
u'= &  u      + \frac{5}{12}\nabla_v R(u,v)u - \frac{1}{6}\nabla_v R(u,v) v 
     \\ &        + \frac{1}{12}\nabla_{u} R(u,v)v  - \frac{1}{6}\nabla_u R(u,v)u  
					+ O(\|v+u\|^5).
\end{split}
\label{eq:PoleApprox}
\end{equation}

\begin{theorem}[Pole ladder is a third order scheme for parallel transport]
\label{thm:PoleApprox}
In a sufficiently small convex normal neighborhood of a point in an affine connection space $(M, \nabla)$ with symmetric connection, the error on one step of pole ladder to transport the vector $u$ along a geodesic segment of tangent vector $[-v,v]$  (all quantities being parallel translated at the mid-point, see Figure \ref{Fig:PoleLadderAnalysis}) is:
\[
u'-u = \frac{1}{12}\left( \nabla_v R(u,v)(5 u - 2v) + \nabla_{u} R(u,v)(v -2 u) \right)
					+ O(\|v+u\|^5).
\] 
\end{theorem}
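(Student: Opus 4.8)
The statement simply collects the computation carried out just above, so the proof amounts to organizing that derivation. The plan is to work entirely in the normal coordinate system centered at the midpoint $m$, in which the geodesic segment $[p,q]$ is the straight line $t\mapsto\exp_m(tv)$ with $p=\exp_m(-v)$ and $q=\exp_m(v)$, and in which the geodesic symmetry with respect to $m$ acts linearly as $x\mapsto -x$. Writing $u=\Pi_p^m u_p$ for the parallel transport of the input vector to $m$, the first two steps of Algorithm \ref{Algo:Pole2} produce the point $p'=\exp_p(u_p)$, whose coordinates we call $-w$; by Theorem \ref{thm:BCH} applied to the pair $(-v,u)$ this is exactly $-w=h_m(-v,u)$, i.e.\ Equation (\ref{eq:w_pole1}). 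The symmetric point is then $q'=\exp_m(w)$, and reading the last two steps of the algorithm backwards characterizes the transported-and-corrected vector $u'=\Pi_q^m u_q$ implicitly by $w=h_m(v,-u')$, again via Theorem \ref{thm:BCH}.

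First I would invert this implicit relation to express $u'$ as a polynomial in $(v,w)$. Writing $u'$ as a sum of homogeneous multilinear terms $A_\bullet + B_{\bullet\bullet} + C_{\bullet\bullet\bullet} + D_{\bullet\bullet\bullet\bullet} + O(\|v+w\|^5)$ and substituting into $w=h_m(v,-u')$, the identification proceeds order by order: degree $1$ gives $u'=v-w$; degree $2$ forces the quadratic terms to vanish since $h_m$ has none; degree $3$ yields the curvature correction $-\tfrac{1}{6}R(w,v)v+\tfrac{1}{3}R(w,v)w$ after using the skew-symmetry $R(a,b)=-R(b,a)$ to collapse the various $R(v-w,\cdot\,)$ expressions; degree $4$ yields the $\nabla R$ correction displayed just before Equation (\ref{eq:PoleApprox}), again after simplification with the symmetries of $R$ and $\nabla R$. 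Then I would substitute the expansion (\ref{eq:w_pole1}) of $w$ in terms of $(v,u)$ into this polynomial: one only needs to carry $w=v-u$ at lowest order into the curvature terms, the degree-$3$ and degree-$4$ corrections of $w$ into the linear part $v-w$, and everything else lands in the $O(\|v+u\|^5)$ remainder. The crucial observation is that the three degree-$3$ contributions — from $v-w$, from $-\tfrac{1}{6}R(w,v)v$, and from $\tfrac{1}{3}R(w,v)w$ — sum to zero, which forces the leading error to be of degree $4$ and to involve $\nabla R$ rather than $R$. Collecting the surviving degree-$4$ terms, namely Equation (\ref{eq:PoleApprox}), and subtracting $u$ gives the stated formula.

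The multilinear bookkeeping is routine; the main obstacle, and the place where care is needed, is the order-by-order inversion of $w=h_m(v,-u')$ together with the substitution of $w(v,u)$, because one must track exactly which homogeneous pieces of $w$ feed into which homogeneous pieces of the answer and apply the curvature identities ($R$ skew in its first two slots, the first Bianchi identity, and the corresponding identities for $\nabla R$) at precisely the right moment to obtain the cancellations. Finally, since $v$ is half the tangent vector of the segment $[p,q]$, an error of degree $4$ in $(v,u)$ per step accumulates to $O(1/n^3)$ over $n$ steps along a curve of fixed length, which justifies the name ``third order scheme''; and since only $\nabla R$ survives at fourth order, the error vanishes to this order whenever $\nabla R\equiv 0$, the point developed in Section \ref{Sec:SymSpaces}.
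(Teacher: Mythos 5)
Your proposal is correct and follows essentially the same route as the paper: working in normal coordinates at the midpoint $m$, applying Gavrilov's formula to get Equation (\ref{eq:w_pole1}), inverting $w=h_m(v,-u')$ order by order to express $u'$ in $(v,w)$, and then substituting $w(v,u)$ so that the third-order curvature contributions cancel (indeed $\tfrac16 R(u,v)v+\tfrac16 R(u,v)v-\tfrac13 R(u,v)v=0$ and the $R(u,v)u$ terms cancel likewise), leaving only the $\nabla R$ terms of Equation (\ref{eq:PoleApprox}). The bookkeeping you describe, including which homogeneous pieces of $w$ feed into which pieces of the answer, matches the paper's derivation exactly.
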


Instead of performing first a symmetry at $m$ and then at $q$, an alternative version of pole ladder is to first perform a symmetry at $p$ and then at $m$. One could think of averaging the two versions of the pole ladder to get a more accurate transport. The error on this alternative version is: 
\[
u'' - u = - \frac{1}{12}\left( \nabla_v R(u,v)( 5 u  +v) + \nabla_{u} R(u,v) (v +2u) \right)
					+ O(\|v+u\|^5).
\] 
Unfortunately, we see that averaging does not kill all the third order terms.


\section{Pole ladder in symmetric spaces}
\label{Sec:SymSpaces}

Theorem \ref{thm:PoleApprox} shows that the main source of error in the pole ladder procedure is not the curvature itself but its covariant derivative. Since a vanishing covariant derivative is the characteristic of a locally symmetric space, this suggests that the error could vanish also in higher order terms. Thus, we investigate in this section local and global symmetric spaces with affine or Riemannian structure. The description of these spaces closely follows the definitions of \cite{Postnikov:2001}[Chapter 4]. The interested read should also refer to \cite{helgason_differential_1962}.
We show that pole ladder is actually locally exact in one step, and even almost surely globally exact in Riemannian symmetric manifolds. The key feature is that the differential of the symmetry is the negative of the parallel transport, so that the parallel transport along a geodesic segment can be realized using the composition of two symmetries, which is called a transvection. 
Even if this result was not really known in the geometric computing community, the construction was already  pictured for instance in \cite[Fig.5, p.168]{trofimov_introduction_1994}). 

\subsection{Locally  symmetric affine connection spaces}

 An affine connection space is said to be locally symmetric if the connection is torsion free and if the the curvature tensor is covariantly constant ($\nabla R=0$). This second condition is equivalent to the preservation of the curvature tensor by the parallel transport along any path. When the manifold is endowed with a (pseudo-) Riemannian metric, one says that it is a locally symmetric (pseudo-) Riemannian space if the covariant derivative of its curvature tensor with respect to the Levi-Civita connection vanishes identically.

Locally symmetric affine connection spaces can be characterized by the geodesic symmetry $s_m$  which maps any point $p$ of a normal neighborhood $U_m$ of $m$ to the reverse point on the geodesic
$q = s_m(p) = \exp_m(-\log_m(p))$. The differential of this mapping is clearly a central symmetry on (a symmetric neighborhood of 0 in) the tangent space at $m$: $(Ds_m)_m = -\Id$, and $m$ can thus be seen as a midpoint of the geodesic segment $[p,q]$ using the exponential barycenter definition since $\log_m(p) + \log_m(q) =0$. 

Using this geodesic symmetry, pole ladder can be rewritten as the composition of a symmetry at $m$ followed by a symmetry at $q$ (or alternatively as a symmetry at $p$ followed by a symmetry at $m$). Such a composition of two symmetries preserves the orientation and is called a transvection (or simply a translation). Moreover, its differential realizes the parallel transport:
\begin{theorem}[Pole ladder is locally exact in locally symmetric spaces] 
In a locally  symmetric affine connection space with geodesic symmetry $s_m(p)$ at point $m \in \M$, the pole ladder transport along the geodesic segment $[p, q=s_m(p)]$ of the tangent vector $u_p \in T_p\M$ (identified to the geodesic segment $[p, p'=\exp_p(t u_p)]$)   is  the tangent vector $u_q = \log_q(q') \in T_q\M$ (identified to the geodesic segment $[q, q'= s_q(s_m(p'))]$). Alternatively, one can start by the symmetry at $p$ to compute $q'= s_m(s_p(p'))$. For a sufficiently small time $t$, both procedure realize exactly the parallel transport. 
\end{theorem}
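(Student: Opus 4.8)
The plan is to reduce the statement to the classical fact (see, e.g., \cite{Postnikov:2001}[Ch.~4] or \cite{helgason_differential_1962}) that in a locally symmetric affine connection space the local geodesic symmetries $s_m$ are \emph{affine} transformations: they preserve $\nabla$, hence send geodesics to geodesics, commute with $\exp$ and $\log$ on their domains, and intertwine parallel transport — for any affine diffeomorphism $\phi$ and any path $c$ from $a$ to $b$ one has $(D\phi)_b\circ\Pi_a^b=\Pi_{\phi(a)}^{\phi(b)}\circ(D\phi)_a$, the right-hand transport being taken along $\phi\circ c$ (this is immediate: if $V$ is $\nabla$-parallel along $c$, then $(D\phi)V$ is $\nabla$-parallel along $\phi\circ c$ since $\phi^*\nabla=\nabla$). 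A composition of affine maps being affine, the transvection $T=s_q\circ s_m$ is affine on a suitable neighborhood, and the whole statement follows once we pin down $T(p)$ and the linear map $(DT)_p$.

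First I would fix the geodesic of the ladder step, parametrized as $\gamma$ with $\gamma(0)=m$, $\gamma(-1)=p$, $\gamma(1)=q$, which is legitimate precisely because $m$ is the midpoint of $[p,q]$. Since $(Ds_m)_m=-\Id$, the symmetry $s_m$ reverses $\gamma$: $s_m(\gamma(s))=\gamma(-s)$; in particular $s_m(p)=q$. As $s_q(q)=q$, we get $T(p)=s_q(s_m(p))=s_q(q)=q$, so $T$ maps the source of the transport to its target. (For the variant one takes $T'=s_m\circ s_p$ and $T'(p)=s_m(s_p(p))=s_m(p)=q$ in the same way.)

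Next I would compute the differential. Applying the intertwining identity to $\phi=s_m$ along the subsegment of $\gamma$ joining $m$ to $p$, whose image under $s_m$ is the subsegment of $\gamma$ joining $m$ to $q$, gives
\[
(Ds_m)_p\circ\Pi_m^p=\Pi_m^q\circ(Ds_m)_m=-\,\Pi_m^q ,
\]
hence $(Ds_m)_p=-\,\Pi_m^q\circ\Pi_p^m=-\,\Pi_p^q$, all transports being along $\gamma$. Since $(Ds_q)_q=-\Id_{T_q\M}$ and $s_m(p)=q$, the chain rule yields
\[
(DT)_p=(Ds_q)_q\circ(Ds_m)_p=(-\Id)\circ(-\,\Pi_p^q)=\Pi_p^q ,
\]
i.e. the differential of the transvection is exactly the parallel transport along $[p,q]$. (The same computation with $s_p$ in place of $s_q$ gives $(DT')_p=(-\,\Pi_p^q)\circ(-\Id)=\Pi_p^q$.)

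Finally, since $T$ is affine with $T(p)=q$ and $(DT)_p=\Pi_p^q$, it commutes with the exponential:
\[
q'=T(p')=T\big(\exp_p(t\,u_p)\big)=\exp_q\big((DT)_p(t\,u_p)\big)=\exp_q\big(t\,\Pi_p^q u_p\big),
\]
so $u_q=\log_q(q')=t\,\Pi_p^q u_p=\Pi_p^q\big(\log_p(p')\big)$ is exactly the parallel transport along the geodesic $[p,q]$ of the vector identified with the segment $[p,p']$; equivalently $T$ carries the segment $[p,p']$ onto the segment $[q,q']$. The hypothesis ``$t$ sufficiently small'' is used only to keep $p'$, $q'$ and all intermediate points inside the common normal neighborhood on which $s_m$ and $s_q$ are defined and affine and the relevant $\exp$ and $\log$ are diffeomorphisms — no quantitative estimate is needed beyond this domain condition. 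The main (and only non-elementary) obstacle is justifying the affineness of the geodesic symmetries in a locally symmetric space; I would either cite it from the references above or recall Cartan's argument that $\nabla R=0$ with vanishing torsion forces $s_m^*\nabla$ to be a torsion-free connection having the same geodesics through $m$ and the same curvature and covariant derivatives at $m$ as $\nabla$, hence equal to $\nabla$. Everything else is bookkeeping with the intertwining identity and the value $(Ds_m)_m=-\Id$.
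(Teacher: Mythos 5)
Your proof is correct and follows essentially the same route as the paper: both arguments rest on the geodesic symmetry being an affine map sending geodesics to geodesics, the key identity $(Ds_m)_p=-\Pi_p^q$ along the geodesic through $m$, and the second symmetry contributing the sign flip $(Ds_q)_q=-\Id$ so that the transvection's differential is exactly $\Pi_p^q$. The only difference is that you derive $(Ds_m)_p=-\Pi_p^q$ from the intertwining of parallel transport by affine maps, whereas the paper simply cites this property from Helgason and Postnikov; your version is slightly more self-contained but not a different proof.
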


%
%
%

\begin{proof}
We consider two points $p$ and $q$ that are sufficiently close in a symmetric normal neighborhood $U_m$ of their midpoint $m$. The geodesic $\exp_p(tu_p)$ starting at $p$ with tangent vector $u_p$ remains in $U_m$ for $t$ small enough. The geodesic symmetry $s_m$ is an affine mapping in $U_m$ that maps geodesics to geodesics \cite{Postnikov:2001}[Chapter 3]. Thus, it maps $\exp_p(tu_p)$ to a geodesic $\exp_q(t u_q ) = s_m(\exp_p(tu_p)) = \exp_{s_m(p)}( t (Ds_m)_p u_p)$ starting at $q = s_m(p)$ with tangent vector $u_q = (Ds_m)_p u_p$. Now, the key property is that the differential of the central symmetry $s_m$ at $p$ is is a linear maps from $T_p\M$ to $T_{s_m(p)}\M$ which realizes exactly the (negative of) the parallel transport along the geodesic segment $[p, s_m(p)]$:  $(Ds_m)_p = -\Pi_{\gamma}$. This property is described in \cite[Eq.(6), p.164]{helgason_differential_1962} and in the proof of Proposition 4.3 of \cite[p.53-54]{Postnikov:2001}. The second symmetry is well defined for a sufficiently small time $t$ and reverses the sign.
\end{proof}

Because we only assumed so far a locally symmetric affine connection, the size of the normal neighborhood containing $p$ and $q$ and the maximal time $t$ cannot be specified without an auxiliary metric. More traditional symmetric (pseudo)-Riemannian spaces provide additional tools to extend this result.

\subsection{Riemannian symmetric spaces}

A connected affine connection space $\M$ is a (globally) symmetric space if there exists at each point a smooth involution $(p,q)\in \M \times \M \rightarrow s_p(q) \in \M$ for which $p$ is an isolated point (i.e. a symmetry)  and which is stable by composition: for any two points $p,q\in\M$,  $s_q \circ s_p \circ s_q = s_{s_p(q)}$. 
A symmetric space is geodesically complete and is obviously locally symmetric. The converse holds modulo discrete quotients: in essence, for each connected and geodesically complete locally symmetric space, there exists a symmetric covering space, which can be taken to be simply connected.
Given a symmetric space, an even number of composition of symmetries generate a translation (or transvection) on the manifold. The Lie group of translations $\G$ is acting transitively on $\M$, and its quotient space $\G/\HH$ by the isotropy subgroup $\HH$ of one point of the manifold can be identified with the symmetric space itself. 

Conversely, Cartan theorem shows that  a homogeneous space with an involutive automorphism $\sigma: \G \rightarrow \G$ satisfying specific properties detailed below is a symmetric space. Because $\sigma$ is an involution, its differential $D_{\Id}\sigma$ has eigenvalues $\pm 1$ which decompose the Lie algebra into the direct sum $\mathfrak g = \mathfrak h \oplus \mathfrak m$: the +1-eigenspace $\mathfrak {h}$ is the Lie algebra of $\HH$, and the $-1$-eigenspace $\mathfrak {m}$ is an ${\mathfrak {h}}$-invariant complement to ${\mathfrak {h}}$ in ${\mathfrak {g}}$ that can be identified to the tangent space at $\M$. These two conditions read in the Lie theoretic characterization $[\mathfrak h, \mathfrak h] \subset \mathfrak h$ and $[\mathfrak h, \mathfrak m] \subset \mathfrak m$, which means that a symmetric space is a reductive homogeneous space. However, reductive homogeneous spaces are not all symmetric:  the key feature is that ${\mathfrak {[m,m] \subset h}}$ (the composition of two symmetries on $\M$ has to be a translation on $\mathfrak h$).  

A natural structure on a symmetric space has to be invariant under its symmetry. This condition uniquely define a symmetric connection on each symmetric space.  For instance, a connected Lie group is a symmetric space with the symmetry $s_p(q) = p q\inv p$ for two points of the group $p, q \in \G$. The associated canonical connection is the mean (or symmetric) Cartan-Schouten connection $\nabla_X Y = \frac{1}{2}[X,Y]$ which was used for defining bi-invariant means on Lie groups even in the absence of an invariant metric \cite{pennec:hal-00699361}. On a more general symmetric space $\M \simeq \G/\HH$, the canonical connection is induced by the symmetric Cartan-Schouten connection on the Lie group of translations $G$, and geodesics of the symmetric space with this canonical connection are realized by the action of one-parameter subgroups of $G$ on a point of the manifold. 

When $\HH$ is compact, one can moreover obtain an invariant metric on $\M \simeq \G/ \HH$ (simply choose a metric on the Lie algebra $\mathfrak g$  and average it over $\HH$). We obtain in this case a Riemannian symmetric manifold where the symmetry is now an isometry. The  Levi-Civita connection of this invariant Riemannian metric coincides with the canonical connection. The invariant metric need not be uniquely define up to a scalar factor. For instance, that there is a one-parameter family of $GL(n)$-invariant metrics on SPD matrices \cite{Pennec:HDR:2006}. 

With an invariant Riemannian metric, we can define the maximal injectivity domain of the exponential map: let $U_p \in T_p\M$ be the set of all vectors $v \in T_p\M$ such that $\gamma(t) = \exp_p(tv)$ is a minimizing geodesic for $t \in [0, 1+\epsilon]$ for some $\epsilon > 0$. This is an open star-shape domain containing $0$ limited by the tangential cut locus $\partial U_p$ where geodesics stop to be minimizing. The image of the tangential cut locus by the exponential map is  the cut locus $Cut(p) = \exp_p(\partial U_p)$, which has null measure with respect to the canonical Riemannian measure. The maximal injectivity domain $U_p$ is 
 mapped diffeomorphically by $\exp_p$ to the normal neighborhood $\M \setminus Cut(p) $ of $p$ in the manifold.   The shortest distance from a point to its cut locus is the injection radius $\mbox{inj}(p)$. Since a Riemannian symmetric space is homogeneous, this radius does not depend on the point $p$ and is equal to its infimum over all the points $\mbox{inj}(\M)$. These definitions allows us to characterize the exactness of pole ladder transport in symmetric spaces.

\begin{theorem}[Pole ladder is almost surely exact in Riemannian symmetric manifolds] 
In a Riemannian  symmetric manifold, pole ladder is exact when $q \not \in Cut(p)$ and $p'=\exp_p(u_p) \not \in Cut(p)$. This happens almost surely since the cut locus has null measure. More restrictive metric conditions of exactness are 
$\dist(p,q) < \mbox{inj}(\M)$ and $\dist(p, p') = \| u_p\|_p  < \mbox{inj}(\M)$. 
\end{theorem}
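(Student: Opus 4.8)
The plan is to sidestep the local Taylor expansion of Section~\ref{Sec:PoleLadder} and argue directly from the global structure of a Riemannian symmetric manifold, reducing the claim to the algebraic fact already used in the locally symmetric case: the two geodesic symmetries performed in one pole ladder step compose to a transvection whose differential is the parallel transport. On a Riemannian symmetric manifold every $s_p$ is a \emph{globally} defined isometric involution, so the constructions $q'=s_m(p')$ and $q''=s_q(q')$ make sense for arbitrary positions; the identities $s_m(p')=\exp_m(-\log_m(p'))$ used in Algorithm~\ref{Algo:Pole2} are merely the way one evaluates these maps away from the cut locus. First I would invoke $q\notin Cut(p)$ to obtain the unique minimizing geodesic $c\colon[0,1]\to\M$ from $p$ to $q$, hence a well-defined midpoint $m=c(1/2)$, and note that $p$ and $q$ are symmetric to each other through $m$ along $c$, so $s_m(p)=q$.

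Next I would compute the transvection $\tau=s_q\circ s_m$. Using the differential-of-symmetry property already recalled in the proof of the previous theorem — $(Ds_m)_p=-\Pi_c\colon T_p\M\to T_q\M$ and $(Ds_q)_q=-\Id$ — one gets $\tau(p)=q$ and $(D\tau)_p=\Pi_c$, the parallel transport along $c$ (unambiguous since $c$ is the unique minimizing geodesic). As $\tau$ is an isometry it carries the geodesic $t\mapsto\exp_p(t\,u_p)$ to the geodesic $t\mapsto\exp_q(t\,\Pi_c u_p)$; evaluating at $t=1$ gives $q''=\tau(p')=\exp_q(\Pi_c u_p)$. It then remains to verify that the vector returned by pole ladder, $u_q=\log_q(q'')$, is precisely $\Pi_c u_p$ and not some other preimage under $\exp_q$.

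This last point is where I expect the real care to be needed. Since $\tau$ is an isometry with $\tau(p)=q$, it maps $Cut(p)$ onto $Cut(q)$ and the maximal injectivity domain $U_p\subset T_p\M$ onto $U_q\subset T_q\M$. Hence $p'\notin Cut(p)$ forces $q''=\tau(p')\notin Cut(q)$, so $\log_q(q'')$ is well defined, and taking $u_p$ to be the representative in $U_p$ (which is the natural meaning of identifying $u_p$ with the segment $[p,p']$ once $p'\notin Cut(p)$) gives $\Pi_c u_p=(D\tau)_p u_p\in U_q$. Because $\exp_q$ restricts to a diffeomorphism $U_q\to\M\setminus Cut(q)$ and $\exp_q(\Pi_c u_p)=q''$ with $\Pi_c u_p\in U_q$, we conclude $\log_q(q'')=\Pi_c u_p$, i.e.\ the exact parallel transport of $u_p$ along $[p,q]$. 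I would also remark that, if one insists on computing the intermediate point $q'$ through $\exp_m(-\log_m(p'))$, this additionally requires $p'\notin Cut(m)$, but that this concerns only the numerical evaluation and not the exactness of the scheme, whose intrinsic formulation uses the global symmetries.

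The two side claims are then immediate: $Cut(p)$ has zero measure for the Riemannian volume, so for a fixed $p$ the hypotheses $q\notin Cut(p)$ and $p'\notin Cut(p)$ hold for almost all positions — hence ``almost surely''; and since homogeneity gives $\dist(p,Cut(p))\ge\mbox{inj}(p)=\mbox{inj}(\M)$, the bounds $\dist(p,q)<\mbox{inj}(\M)$ and $\dist(p,p')=\|u_p\|_p<\mbox{inj}(\M)$ are sufficient (but not necessary) to guarantee, respectively, $q\notin Cut(p)$ and $p'\notin Cut(p)$.
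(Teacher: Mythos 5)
Your proposal is correct and follows essentially the same route as the paper: reduce exactness to the transvection $s_q\circ s_m$ whose differential is the parallel transport (as in the preceding locally symmetric theorem), and use the cut-locus hypotheses to make the midpoint and the logarithms well defined. In fact your write-up is more careful than the paper's three-sentence argument on the one delicate point it glosses over --- namely that $\tau=s_q\circ s_m$, being an isometry, carries $U_p$ onto $U_q$ and $Cut(p)$ onto $Cut(q)$, which is what guarantees that $\log_q(q'')$ returns exactly $\Pi_c u_p$ rather than another preimage, and your remark that $p'\notin Cut(m)$ is only needed for the coordinate evaluation of $s_m(p')$, not for the intrinsic exactness, is a worthwhile clarification.
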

\begin{proof}
The first condition is needed to have a well defined mid-point. The second condition ensures that the geodesic $\exp_p(t u_p)$ does not meet the cut-locus of $p$ for $t\in[0,1]$, so that the initial tangent vector of the geodesic from $p$ to $p'$ remains $u_p$. Thanks to the global symmetry $s_m$, similar conditions hold for $q$ and $q'$, so that $-u_q$ is the exact parallel transport of $u_p$ along the geodesic segment $[p,q]$.
\end{proof}

\bibliographystyle{alpha} 


\newcommand{\etalchar}[1]{$^{#1}$}
\hyphenation{ Chris-to-dou-la-kis Fach-ge-sprach feh-ler-be-hand-lung
  feh-ler-er-ken-nung Han-over Jean-ette Mann-heim Piep-rzyk Reuh-ka-la
  Rus-in-kie-wicz Sa-degh-i-yan Worm-ald zu-griffs-ver-fahr-en }

\end{document}